 \setlist[enumerate,1]{label=(\arabic*)}
 \setlist[enumerate,2]{label=(\alph*)}
\definecolor{darkred}{rgb}{0.8,0,0}
\definecolor{darkblue}{rgb}{0,0,0.55}
\definecolor{darkgreen}{rgb}{0,0.39,0}
\newtheorem{theorem}{Theorem}[section]
\newtheorem{lemma}[theorem]{Lemma}
\newtheorem{definition}[theorem]{Definition}
\theoremstyle{definition}
\newtheorem{model}[theorem]{Model}
\newtheorem{remark}[theorem]{Remark}
\newtheorem*{remark*}{Remark}
\newcommand{\eps}{\varepsilon}
\newcommand{\epsk}{\varepsilon_k}
\newcommand{\R}{\mathbb{R}}
\newcommand{\N}{\mathbb{N}}
\newcommand{\sphere}{\mathbb{S}}
\newcommand{\calA}{\mathcal{A}}
\newcommand{\calD}{\ensuremath{\mathcal{D}}}
\newcommand{\calE}{\mathcal{E}}
\newcommand{\calF}{\mathcal{F}}
\newcommand{\calG}{\ensuremath{\mathcal{G}}}
\newcommand{\calH}{\mathcal{H}}
\newcommand{\calL}{\mathcal{L}}
\newcommand{\calP}{\mathcal{P}}
\newcommand{\calU}{\mathcal{U}}
\newcommand{\calV}{\mathcal{V}}
\newcommand{\calZ}{\mathcal{Z}}
\newcommand{\bldZ}{\mathbf{Z}}
\newcommand{\Mass}{\ensuremath{{\sf\mathbf{M}}}}
\newcommand{\BV}{\mathrm{BV}}
\newcommand{\weakto}{\rightharpoonup}
\DeclareMathOperator{\dist}{dist}
\DeclareMathOperator{\sdist}{sdist}
\DeclareMathOperator{\spt}{spt}
\DeclareMathOperator{\loc}{loc}
\DeclareMathOperator{\range}{range}
\newcommand{\me}{M_{\eps}}
\newcommand{\cek}{c\sqrt{\epsk}}
\newcommand{\och}{\overline{\chi}_k}
\newcommand{\ch}{\chi_{\eps}}
\begin{document}
\author{Jakob Fuchs}
\address[Jakob Fuchs]{Department of Mathematics,
Technische Universität Dortmund}
\email{jakob.fuchs@tu-dortmund.de}
\author{Matthias Röger}\address[Matthias Röger]{Department of Mathematics,
Technische Universität Dortmund}
\email{matthias.roeger@tu-dortmund.de}
\title{Mathematical analysis of a mesoscale model for multiphase membranes}

\begin{abstract}
In this paper we introduce a mesoscale continuum model for membranes made of two different types of amphiphilic lipids.
The model extends work by Peletier and the second author [\emph{Arch.~Ration.~Mech.~Anal.~193}, 2009] for the one-phase case.
We present a mathematical analysis of the asymptotic reduction to the macroscale when a key length parameter becomes arbitrarily small.
We identify two main contributions in the energy: one that can be connected to bending of the overall structure and a second that describes the cost of the internal phase separations.
We prove the $\Gamma$-convergence towards a perimeter functional for the phase separation energy and construct, in two dimensions, recovery sequences for the convergence of the full energy towards a 2D reduction of the Jülicher--Lipowsky bending energy with a line tension contribution for phase separated hypersurfaces.
\\[2ex]\noindent%
{\bf AMS Classification.} 74K15, 
49J45, 
49Q20, 
74L15, 
74Q05 
\\[2ex]\noindent%
{\bf Keywords. }Phase separation, biomembranes, bending energy, $\Gamma$-convergence, variational modeling.
\end{abstract}

\maketitle

\tableofcontents

\section{Introduction}\label{sec:1}
Phase separation processes play an important role in various applications and have been a key motivation for a number of mathematical fields, such as the Calculus of Variations and Geometric Measure theory.

The analysis of phase separation models is rather advanced in physics and materials science.
Notably, well founded atomistic and mesoscale variational models have been related to macroscopic \emph{sharp interface models} by rigorous mathematical convergence results.

Macroscopic descriptions of a separation of a given domain $\Omega\subset\R^d$ in two phases can be formulated in terms of phase indicator function $u:\Omega\to \{0,1\}$, where the two values $0$ and $1$ represent the distinct phases.
The surface area of the interface between both phases describes in many systems a relevant energy, and can be formalized as the \emph{perimeter functional} $\calP:L^1(\Omega,\{0,1\})\to [0,\infty]$,
\begin{equation*}
  \calP(u) = 
  \begin{cases}
    \int_\Omega |\nabla u|\quad&\text{if $u$ is of bounded variation,}\\
    +\infty &\text{ else.}
  \end{cases}
  \label{1eq:Per}
\end{equation*}
The prototype of a mesoscale \emph{diffuse interface} energy is the \emph{Van der Waals--Cahn-Hilliard energy}, given by
\begin{equation}
  \calP_\eps(u) := \int_{\Omega} \Big(\eps |\nabla u|^2 +
  \frac{1}{\eps}W(u)\Big)\,d\calL^n\,.
  \label{1eq:CH}
\end{equation}
Here $\eps>0$ is a small parameter, $W$ is a suitable nonnegative double-well potential with $\{W=0\}=\{0,1\}$ and $u$ is a smooth function on $\Omega\subseteq\R^n$.
To achieve low energy values, the function $u$ has to be close to the wells of the potential except for thin transition layers with thickness of order $\eps$.

The celebrated result by Modica and Mortola \cite{MoMo77,Modi87} describes the sharp interface limit $\eps\to 0$ and connects the mesoscopic diffuse model to the macroscopic sharp interface description:
the functionals $\calP_\eps$ converge in the sense of $\Gamma$-convergence to the perimeter functional,
\begin{equation*}
  \calP_\eps \to 2k \calP\,,\quad k = \int_{0}^1 \sqrt{W}\,.
  \label{eq:c0}
\end{equation*}

Compared to materials science and physics, the mathematical analysis of models for phase separation on biomembranes or artificial membranes made of amphiphilic lipids is much less advanced.
Macroscopic theories for homogeneous membranes focus on \emph{curvature energies} of Canham--Helfrich type \cite{Canh70,Helf73}.
Real-world biomembranes and many artificial membranes on the other hand consist of a number of different types of lipids or other constituents, and conformations are not only determined by the overall shape but also by its internal organization and the partition into distinct phases.

Variational models for multiphase membranes therefore typically include a bending energy with phase-dependent parameters and a line tension energy between phases.
Let us consider the simplest situation of two phases and a membrane represented by a closed surface $S\subset\R^3$ that is decomposed into a disjoint union of open subsets $S_1,S_2$ of $S$ representing the phases, and their common boundary $\Gamma$.
For such configurations extensions of the Canham--Helfrich energies have been proposed by Jülicher--Lipowsky \cite{JuLi96} in the form
\begin{equation}
  \mathcal{E}(S_1,S_2) = \sum_{j=1,2}\int_{S_j} \Big(k_1^j (H-H_0^{j})^2+ k_2^j K\Big)\,d\calH^{2} + \sigma\int_{\Gamma} 1\,d\calH^1\,.
  \label{eq:E-twophase}
\end{equation}
The first integral represents a bending energy that involves in general phase-dependent bending constants $k_1^j,k_2^j$ and the spontaneous curvature $H_0^j$, $j=1,2$.
The second integral in \eqref{eq:E-twophase} describes a phase separation (line tension) energy.
A simple prototype of the bending contribution is the Willmore energy, that is obtained in the case $H_0^j=0=k_2^j$, $j=1,2$.
A rigorous mathematical understanding of Jülicher--Lipowsky-type energies is rather challenging and corresponding results are sparse, see \cite{ChMV13,Helm13,Helm15} for a variational analysis under the assumption of rotational symmetry and \cite{BrLS20} for the general case.
For a recent contribution to sharp interface limits in diffuse phase separation and Jülicher--Lipowsky type energies on generalized hypersurfaces see \cite{OlRoe23}.

\medskip
In this paper we introduce a \emph{mesoscale} model for two-phase membranes that can be expected to carry both an energy contribution due to bending of its overall structure and a phase separation energy.
The model is derived in analogy to (but with a number of substantial reductions) the mesoscale model proposed by Peletier--Röger \cite{PeRoe09}.
For a two-dimensional reduction of that model in \cite{PeRoe09} a rigorous transition to the macroscale has been achieved.
In the limit a generalized Euler elastica functional on systems of $H^2$-regular curves is obtained and it has been shown that the model reflects self-organization in uniformly thin and closed structures, reflecting a strong resistance to fracture and stretching, whereas the bending penalty appears on a smaller energy scale.
Part of the mathematical analysis has been extended to three dimensions \cite{LuPR14,LuRo16}.
Different mesoscale models and their macroscale reductions have been investigated in \cite{SeFr14,Merl15,Merl15a}.

\medskip
For the two-phase mesoscale model introduced in this contribution we present a rigorous analysis in form of a $\Gamma$-convergence result for the phase separation part of the energy that we extract (and generalize) from the full model.
Regarding the full model we have not achieved a full $\Gamma$-convergence result but we present an upper bound estimate with respect to the prospective limit energy, providing the construction of a «recovery sequence».

The analysis for the phase separation energy follows in its main arguments the classical analysis of the \emph{sharp interface limit} of diffuse models of Van der Waals--Cahn-Hilliard type, whereas the construction of recovery sequences  for the full model is analogous to the one in \cite{PeRoe09}.
However, in both parts particular challenges need to be addressed and require some new ideas.
This for example concerns for the phase separation part the number of variables, the scaling of the smooth phase field and additional dependence of the double-well potential on the parameter $\eps>0$.
The main difference in the construction of recovery sequences for the full model is the non-homogeneous thickness of the mesoscale layers, which is enforced by the phase change, and the matching of the mass constraints for both phases.

\medskip
In the next section we briefly discuss the model from \cite{PeRoe09} and introduce the mesoscale model for two-phase membranes made of amphiphilic molecules, in a two-dimensional reduction.
Section \ref{sec:3} presents the analysis of the phase separation part of the full energy, generalized to arbitrary space dimensions.
Results for the full model are given in Section \ref{sec:4}, where we introduce a prospective limit energy and prove the upper bound estimate necessary for a corresponding $\Gamma$-convergence result.
Finally, in Section \ref{sec:5} we summarize our results and discuss some implications and possible extensions.

\section{A mesoscale model for two-phase membranes}\label{sec:2}
Here we start with a mesoscale model for \emph{homogeneous} bilayer membranes as introduced in \cite{PeRoe09}.
That model describes both the self-assembling property of amphiphiles in aqueous solutions and the presence of a resistance against rupture, stretching and bending of the structures.
Moreover, it has very few structural ingredients, notably repulsion between polar and non-polar particles, incompressibility and 
the amphiphilic character of the lipid molecules.
The model in \cite{PeRoe09} considers one type of amphiphilic lipid.
In the mesoscale description its polar «head» and its non-polar «tail» are considered separately and the connection between both is implemented by a «soft constraint».

In this contribution we will address multi-phase configurations, with two types of amphiphilic lipids that are distinguished by their lipid length (in the sense introduced below).

\medskip
For convenience let us first describe the mesoscale model from \cite{PeRoe09}, which was itself derived from a microscopic model, and introduce the changes in the two-phase case below.

\begin{model}[Peletier-Röger \cite{PeRoe09}]
Introduce a scale $\eps>0$ (a posteriori related to the thickness of preferred structures) and a parameter $\Mass>0$ describing a mass constraint.
We then consider a configuration space given by tuples of functions
\begin{align*}
  &\calA_\eps := \Big\{u_\eps\,,v_\eps \in L^1(\R^2,\{0,\eps^{-1}\}) \,:\, \|u_\eps\|_{L^1(\R^n)} =\|v_\eps\|_{L^1(\R^n)} = \Mass \,,
  u_\eps+v_\eps\leq \frac{1}{\eps}\Big\}\,.
\end{align*}
The functions $u_\eps,v_\eps$ describe the distribution of the tail and head particles, respectively.
The constraint of the values to $\{0,\eps^{-1}\}$ and the almost everywhere disjointness of the supports reflects an incompressibility condition and a certain rescaling of the model with $\eps$.

For $(u_\eps\,,v_\eps)\in \calA_\eps$ we introduce the functional
\begin{align}
  \calF_\eps(u_\eps\,,v_\eps) 
  := \eps |\nabla u_\eps|(\R^2) +  \frac{1}{\eps}d_1(u_\eps,v_\eps)\,,
  \label{eq:PRenergy}
\end{align}
where $d_1$ describes the Monge-Kantorovich distance between two mass distributions.

The first term in the energy expresses an repulsion energy between the non-polar tail particles and the polar particles (heads and surrounding water).
The second term is an implementation of the head-tail connection in the lipids in form of a penalization, thus replacing a «hard» by a «soft» constraint.

Exploiting the structure that is induced by the variational characterization of the Monge--Kantorovich distance a (sharp) lower bound for the energy of a configuration is derived in \cite{PeRoe09}.
Therefore a parametrization of the supports of the tail distribution $\{u_\eps=\eps^{-1}\}$ and of the head distribution $\{v_\eps=\eps^{-1}\}$ is used, given by 
\begin{itemize}
  \item a family of arclength-parametrizations $\gamma_i:[0,L_i)\to\R^2$, $i\in I$, of the boundary curves of $\{u_\eps=\eps^{-1}\}$\,, oriented such that $\nu_i:[0,L_i)\to\sphere^1$, $\nu_i=(\gamma_i')^\perp$ is the inner unit normal field of $\{u_\eps=\eps^{-1}\}$ along $\gamma_i$,
  \item parametrizations of \emph{transport rays} with respect to the optimal mass transport associated to the Monge--Kantorovich distance;
  the ray directions induce a \emph{direction field} $\theta_i:[0,L_i]\to\sphere^1$ along $\gamma_i$ such that $\theta_i\cdot\nu_i>0$,
  \item a field $M_i:[0,L_i)\to [0,\infty)$ along $\gamma_i$, where $M_i(s)$ describes the amount of mass being transported through $\gamma_i(s)$ «in the right direction» (see \cite{PeRoe09} for details).
\end{itemize}
This eventually leads to a family of parametrizations
\begin{align*}
  &\psi_i^\eps : \big\{(s,m)\,:\,s\in [0,L_i),\,m\in \big[0,M_i(s)\big)\big\}\to \{u_\eps=\eps^{-1}\}\,,\\
  &\psi_i^\eps(s,m) = \gamma_i(s) + t_i^\eps(s,m)\theta_i(s)\,,
\end{align*}
where
\begin{equation*}
  t_i^\eps(s,m) = \frac{\nu_i(s)\cdot\theta_i(s)}{\theta_i'(s)\cdot\theta_i^\perp(s)}
  \Big[1- \Big(1-\frac{2\theta_i'(s)\cdot\theta_i^\perp(s)\eps} 
  {|\nu_i(s)\cdot\theta_i(s)|^2}m\Big)^\frac{1}{2}\Big]
\end{equation*}
gives the (signed) distance from $\gamma_i(s)$ of a point that corresponds to the relative mass $m$ on the ray through $\gamma_i(s)$; by negative values of $m$ the support of $v_{\eps}$ is parametrized.

This parametrization of the support allows for a lower bound estimate, which by the results in \cite{PeRoe09} is sharp, of the form
\begin{align}
  \calF_\eps(u_\eps,v_\eps) &\geq \sum_{i\in I} \Big(L_i + \int_0^{L_i}\int_0^{M_i(s)}t_i^\eps(s,m)-t_i^\eps(s,m-M_i(s))\,dm\,ds\Big)\nonumber\\
  &\geq \sum_{i\in I} \Big(L_i + \calD_\eps(\gamma_i,\theta_i,M_i)\Big)\,,
  \label{eq:EstCalF}
\end{align}
with a «reduced distance» term
\begin{equation}
  \calD_\eps(\gamma,\theta,M) = \int_0^{L}\Biggl[\frac{1}{\nu(s)\cdot\theta(s)}M(s)^2+
  \frac{\eps^2}{4\big(\nu\cdot\theta\big)^5(s)}|\theta'(s)|^2M(s)^4\Biggr]\,ds\,.
  \label{eq:ModDist}
\end{equation}

If one focusses on configurations with nearly optimal energy in the sense that
\begin{equation*}
  \calF_\eps(u_\eps,v_\eps)-2\Mass\leq \eps^2\Lambda
\end{equation*}
for some $\Lambda>0$ independent of $\eps$, one expects that with $\eps\to 0$ the mass distribution becomes uniformly close to one and that the ray direction is very close to the inner unit normal of $\{u_\eps=\eps^{-1}\}$.
Then the last term in \eqref{eq:ModDist} formally leads with $\eps\to 0$ to a contribution by Eulers elastica energy.

This has been made precise and rigorous in \cite{PeRoe09}, where it is in particular proved that
\begin{equation*}
  \frac{1}{\eps^2}\Big(\calF_\eps(u_\eps,v_\eps)-2\Mass\Big)
\end{equation*}
converge in the sense of $\Gamma$-convergence with $\eps\to 0$ to a multiple of a generalized Eulers elastica energy.
\end{model}

\begin{remark}
Let us remark, that the energy is localized on the different boundary curves $\gamma_i$, and the coupling is only via the total mass constraint and the constraint of non-overlapping supports.
In particular, the key analysis is done in a setting where a tuple $(\gamma,\theta,M)$ and the energy $L(\gamma)+ \calD_\eps(\gamma,\theta,M)$ is evaluated.

\smallskip
Whereas the model from \cite{PeRoe09} is very specific and its motivation from biophysics is rather weak, the reduced energy that appears in \eqref{eq:ModDist} takes a quite common form for energies appearing in biomembrane models.
In fact, the corresponding expression consists of an energy over curves (in other models often a midcurve of the bilayer) with an energy density given by the geometry of the curve and an additional director field.
Therefore, the mathematical analysis and the tools derived in such particular setting also have importance beyond the specific model.
\end{remark}

Let us now derive an analogue model for the case of two different types of amphiphilic lipids.

In contrast to the approach in \cite{PeRoe09} we here start from the reduced description obtained in the lower bound \eqref{eq:EstCalF}.

To motivate the mesoscale energy we first consider a single curve over which head and tail supports are parametrized.

\begin{definition}\label{def:ConfSingleCurve}
Let $\eps>0$ be given.
The \emph{configuration space} $\calZ_\eps$ is defined as the set of tuples $Z=(\gamma,\theta,\chi,M)$ with the following properties:
\begin{enumerate}
  \item\label{it:ZEps-gamma} $\gamma:\R\to\R^2$ is a periodic $C^1$-curve   parametrized by arclength.
  We denote by $L(\gamma)=L$ its minimal period.
  $\theta,\chi,M$ are also $L$-periodic.
  \item\label{it:ZEps-theta} The \emph{ray direction} $\theta:\R\to \sphere^1$ is locally $H^1$-regular.
  \item\label{it:ZEps-M} The \emph{lipid mass distribution} $M:\R\to [0,\infty)$ is locally $H^1$-regular.
  \item\label{it:ZEps-MDisj} The \emph{phase indicator function} $\chi:\R\to\{0,1\}$ is measurable.
  \item\label{it:ZEps-psi} The mapping $\psi=\Psi^\eps(Z)$ defined by
  \begin{align*}
    &\psi : \big\{(s,m)\,:\,s\in [0,L),\,m\in \big(-M(s),M(s)\big)\big\}\to \R^2\,,\\
    &\psi(s,m) = \gamma(s) + t^\eps(s,m)\theta(s)\,,\\
    &t^\eps(s,m) = \frac{\nu(s)\cdot\theta(s)}{\theta'(s)\cdot\theta^\perp(s)}
    \Big[1- \Big(1-\eps\frac{2\theta'(s)\cdot\theta^\perp(s)} 
    {|\nu(s)\cdot\theta(s)|^2}m\Big)^\frac{1}{2}\Big]
  \end{align*}
  is an embedding.
\end{enumerate}
\end{definition}
To $Z=(\gamma,\theta,\chi,M)\in \calZ_\eps$ we can associate the mass distributions $M^{(1)},M^{(2)}$ of the two types of lipids,
\begin{equation}
  M^{(1)}(s) := (1-\chi(s))M(s)\,,\quad
  M^{(2)}(s) := \chi(s)M(s)\,.
  \label{eq:DefMjs}
\end{equation}
These distributions describe how much mass of the respective lipid type is located on the ray through $\gamma(s)$.
We always have $M^{(1)}M^{(2)}=0$, which means that the model forbids any phase change on a single ray. 

The total mass of lipids of type $j\in\{1,2\}$ is given by
\begin{equation}
  M_j(Z)=\int_0^L M^{(j)}(s)\,ds\,.
  \label{eq:DefMj}
\end{equation}

The condition \ref{it:ZEps-psi} in Definition \ref{def:ConfSingleCurve} ensures that $Z$ characterizes well defined tail and head regions of the two types of lipids, given by
\begin{align*}
  \calU^{(j)}_\eps(Z)&=\Big\{\psi(s,m)\,:\, s\in [0,L),\,m\in \big(0,M^{(j)}(s)\big)\Big\}\,,j=1,2\,,\\
  \calV^{(j)}_\eps(Z)&=\Big\{\psi(s,m)\,:\, s\in [0,L),\,m\in \big(-M^{(j)}(s),0\big)\Big\}\,,j=1,2\,.
\end{align*}
  
A key new aspect in the two-phase case is that we need to distinguish the two types of lipids.
We choose here to implement an energy description that induces a different preference for the ray length of type 1 or type 2 lipids.
This is done by choosing different weights for the distance term in the energy functional, expressed by an additional model parameter $\lambda_\eps$ close to one.

\begin{definition}[Multiphase energy for a single curve] \label{def:EnSingleCurve}
Let $\eps>0$, $\sigma>0$ and $\lambda_\eps>0$ be given.
For $Z=(\gamma,\theta,\chi,M)\in\calZ_\eps$ and $M^{(1)},M^{(2)}$ as defined in \eqref{eq:DefMjs} we consider the energy 
\begin{align}
  \tilde F_\eps(Z) = L(\gamma) + \lambda_\eps^2\calD_\eps(\gamma,\theta,M^{(1)})+ \calD_\eps(\gamma,\theta,M^{(2)})+\frac{\sigma}{2} \int_0^L  \eps^2|M'|^2(s)\,ds\,,
  \label{eq:EnSingleCurve}
\end{align}
with $\calD_\eps$ as in \eqref{eq:ModDist}.
\end{definition}
The first three terms in the energy are in analogy with the one-phase bilayer energy from \cite{PeRoe09}.
When looking at configurations made of families of curves and tuples $Z$ as above it turns out that these contributions alone may lead to low energy configurations that are not consistent with the observed structures of phase separated membranes, see Figure \ref{fig:NoLineTension}.
To remedy this issue the fourth contribution in \eqref{eq:EnSingleCurve} is added.
The term may be connected to an additional contribution from the interaction between lipid heads and water molecules.

\begin{remark}[Additional energy term]
Let us give a completely formal indication, under restrictive simplifying assumptions, how to motivate the additional energy term.
We parametrize the boundary curve between lipid head support and the outside water by the mapping $s\mapsto \gamma(s)+t^\eps(s,M(s))\theta(s)$.
Then we define a corresponding \emph{excess energy} by
\begin{equation}
  \int_0^L \sigma\Big[\sqrt{\big(\gamma\cdot \theta^\perp+t^\eps(\cdot,M)\theta'\cdot\theta^\perp)^2 
  +(\gamma'\cdot\theta + t^\eps(\cdot,M)')^2} -1 \Big]\,ds\,.
\label{eq:Motiv}
\end{equation}
The identity for mass coordinates gives
\begin{equation*}
  M(s)= \frac{t^\eps(s,M(s))}{\eps}\nu(s)\cdot\theta(s)-\frac{t^\eps(s,M(s))^2}{2}\theta'(s)\cdot\theta^\perp(s)\,.
\end{equation*}
Differentiating this equations yields
\begin{equation*}
  \eps M' = t^\eps(\cdot,M)'\Big(\nu\cdot\theta-t^\eps(\cdot,M)\theta'\cdot\theta^\perp\Big) 
  +t^\eps(\cdot,M)\big(\nu\cdot\theta\big)'-\frac{t^\eps(s,M(s))^2}{2}\big(\theta'\cdot\theta^\perp\big)'\,.
\end{equation*}
We expect generically $t^\eps(s,M(s))=O(\eps)$, $\theta\cdot\nu = 1+O(\eps)$ and neglect all contributions formally of order $\eps$ or smaller to deduce 
\begin{equation*}
  \eps M' \approx t^\eps(\cdot,M)'\,\,,\quad
  \sqrt{\big(\gamma\cdot \theta^\perp+t^\eps(\cdot,M)\theta'\cdot\theta^\perp)^2 
  +(\gamma'\cdot\theta + t^\eps(\cdot,M)')^2} -1 \approx \frac{1}{2}\eps^2(M')^2\,.
\end{equation*}
Using this in \eqref{eq:Motiv} gives the additional contribution in \eqref{eq:EnSingleCurve}.
\end{remark}
\begin{figure}
\centerline{


\includegraphics[width=0.5\textwidth]{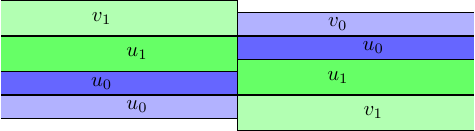}
}
\caption{The total width being equal on either side, switching type does not increase the energy.}
\label{fig:NoLineTension}
\end{figure}

Following \cite{PeRoe09} we can estimate the energy from below by a somewhat simpler expression.
Again this estimate is sharp and it is therefore well motivated to pass to this reduced energy.

\begin{lemma} \label{lem:ModelDer}
Let $Z=(\gamma,\theta,\chi,M)\in\calZ_\eps$ as in Definition \ref{def:ConfSingleCurve} and $M_1(Z), M_2(Z)$ as defined in \eqref{eq:DefMj} be given.
We set $a_\eps(0)=\lambda_\eps$ and $a_\eps(1)=1$.

Then for $\tilde F_\eps$ as in \eqref{eq:EnSingleCurve} the lower estimate
\begin{align}
  \tilde F_\eps(Z) \geq F_\eps(Z) 
  := 2\lambda_\eps M_1(Z) + 2M_2(Z)
  + \eps^2 \big(E_\eps(Z)+G_\eps(Z)\big)
 \label{eq:EnEstSingleCurve}
\end{align}
holds with
\begin{align}
  E_\eps(Z) &= \int_0^L \Big[\frac{1}{\eps^2}(1 - a_\eps(\chi) M)^2 +\frac{\sigma}{2}|M'|^2(s)\Big]\,ds\,,
\label{eq:DefPEps}\\
  G_\eps(Z) &= \int_0^L \Big[\frac{1}{\eps^2}\frac{1-\nu\cdot\theta}{\nu\cdot\theta}a_\eps(\chi)^2 M^2 + \frac{a_\eps(\chi)^2|\theta'|^2M^4}{4(\nu\cdot\theta)^5}\Big]\,ds\,.
\label{eq:DefGEps}
\end{align}
\end{lemma}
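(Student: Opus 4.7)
The plan is to observe that the claimed inequality is in fact a pointwise \emph{identity} $\tilde F_\eps(Z) = F_\eps(Z)$, obtained by straightforward algebraic manipulation. My strategy is to expand both sides integrand-by-integrand, use the binary nature of $\chi$ to collapse the two-phase contributions into a single expression involving $a_\eps(\chi)$, cancel the $|M'|^2$ and $|\theta'|^2 M^4$ terms directly, and complete a square for the remaining terms.

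First I would rewrite the weighted sum of the $\calD_\eps$-terms appearing in $\tilde F_\eps(Z)$. Because $\chi(s) \in \{0,1\}$ pointwise, one has $(1-\chi)^p = 1-\chi$ and $\chi^p = \chi$ for every integer $p\geq 1$, so
\[
\lambda_\eps^2 (M^{(1)})^p + (M^{(2)})^p = \bigl(\lambda_\eps^2(1-\chi) + \chi\bigr) M^p = a_\eps(\chi)^2 M^p \qquad (p=2,4),
\]
using that $a_\eps(\chi) = \lambda_\eps(1-\chi) + \chi$ and $(1-\chi)\chi = 0$ give $a_\eps(\chi)^2 = \lambda_\eps^2(1-\chi) + \chi$. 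Substitution into the definition of $\calD_\eps$ yields
\[
\lambda_\eps^2 \calD_\eps(\gamma,\theta,M^{(1)}) + \calD_\eps(\gamma,\theta,M^{(2)}) = \int_0^L \frac{a_\eps(\chi)^2 M^2}{\nu\cdot\theta}\,ds + \eps^2 \int_0^L \frac{a_\eps(\chi)^2 |\theta'|^2 M^4}{4(\nu\cdot\theta)^5}\,ds.
\]

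After this substitution, the quartic $|\theta'|^2 M^4$ integrand and the $\frac{\sigma\eps^2}{2}|M'|^2$ integrand present in $\tilde F_\eps$ coincide respectively with the second term of $\eps^2 G_\eps(Z)$ and the second term of $\eps^2 E_\eps(Z)$, so they cancel from $\tilde F_\eps(Z) - F_\eps(Z)$. The remaining task is to verify
\[
L(\gamma) + \int_0^L \frac{a_\eps(\chi)^2 M^2}{\nu\cdot\theta}\,ds = \int_0^L 2 a_\eps(\chi) M\,ds + \int_0^L (1 - a_\eps(\chi) M)^2\,ds + \int_0^L \frac{1 - \nu\cdot\theta}{\nu\cdot\theta}\,a_\eps(\chi)^2 M^2\,ds,
\]
together with $\int_0^L 2 a_\eps(\chi) M\,ds = 2\lambda_\eps M_1(Z) + 2 M_2(Z)$, which is immediate from the definitions of $a_\eps$, $M^{(j)}$ and $M_j(Z)$. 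For the displayed identity, expanding $(1 - a_\eps M)^2 = 1 - 2 a_\eps M + a_\eps^2 M^2$ and $\frac{1-\nu\cdot\theta}{\nu\cdot\theta} a_\eps^2 M^2 = \frac{a_\eps^2 M^2}{\nu\cdot\theta} - a_\eps^2 M^2$ and adding them cancels the bare $a_\eps^2 M^2$, leaving the integrand $1 - 2 a_\eps M + \frac{a_\eps^2 M^2}{\nu\cdot\theta}$; integration over $[0,L]$ then gives exactly the left-hand side.

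There is no genuine obstacle, since the lemma is a reformulation rather than a sharp estimate. The only nontrivial input is the collapse $(M^{(1)})^p = (1-\chi) M^p$ and $(M^{(2)})^p = \chi M^p$ afforded by $\chi \in \{0,1\}$; after this the assertion reduces to two lines of algebra. The value of the statement lies not in the inequality itself but in the resulting decomposition of $\tilde F_\eps$ into a scale-$1$ mass contribution $2\lambda_\eps M_1 + 2 M_2$ and a scale-$\eps^2$ excess $E_\eps + G_\eps$, in which the bending-type and phase-separation-type terms driving the $\eps \to 0$ analysis are isolated.
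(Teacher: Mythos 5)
Your proposal is correct and follows essentially the same route as the paper: use $\chi\in\{0,1\}$ (equivalently $M^{(1)}M^{(2)}=0$) to collapse $\lambda_\eps^2(M^{(1)})^p+(M^{(2)})^p$ into $a_\eps(\chi)^2M^p$, complete the square $1+a_\eps^2M^2=2a_\eps M+(1-a_\eps M)^2$, split off $\frac{1-\nu\cdot\theta}{\nu\cdot\theta}a_\eps^2M^2$, and match the $|M'|^2$ and $|\theta'|^2M^4$ terms directly. Your observation that the bound is in fact an identity is also consistent with the paper, whose single ``$\geq$'' step is an equality (the estimate is sharp).
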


\begin{proof}
By completing the squares and using that $M^{(1)}M^{(2)}=0$ for the mass distributions defined in \eqref{eq:DefMjs} we compute
\begin{align*}
  &L(\gamma) + \lambda_\eps^2 \calD_\eps(\gamma,\theta,M_1) + \calD_\eps(\gamma,\theta,M_2) \\
  &\qquad= \int_0^L \Big[1 + \frac{\lambda_\eps^2}{\nu\cdot\theta}(M^{(1)})^2 + \lambda_\eps^2 \frac{\eps^2 |\theta'|^2}{4 (\nu\cdot\theta)^5}(M^{(1)})^4
  + \frac{1}{\nu\cdot\theta}(M^{(2)})^2 + \frac{\eps^2 |\theta'|^2}{4 (\nu\cdot\theta)^5}(M^{(2)})^4\Big]\,ds\\
  &\qquad\geq 2\lambda_\eps M_1(Z_i) + 2M_2(Z_i)\\
  &\qquad\qquad+\int_0^L \Big[(1-a_\eps(\chi) M)^2 +\frac{1-\nu\cdot\theta}{\nu\cdot\theta}a_\eps(\chi)^2 M^2 + \eps^2\frac{a_\eps(\chi)^2|\theta'|^2M^4}{4(\nu\cdot\theta)^5}\Big]\,ds\,.
\end{align*}
Adding the last term in \eqref{eq:EnSingleCurve} and rearranging gives the claim.
\end{proof}

We now fix the general setup and consider a family of curves over which the support of the lipids is parametrized.

\begin{definition}[Mesoscale model]
\label{def:MesoMod}
Let $\eps>0$ and $\Mass_1,\Mass_2>0$ be given.
Denote by $\calA_\eps$ the set of admissable families of tuples $\bldZ=(Z_i)_{i=1,\dots,N}$ such that $N\in\N$,
\begin{align}
  &Z_i\in \calZ_\eps\quad\text{ for all }i=1,\dots,N\,,
  \label{eq:A-Z}\\
  &\big\{\range \Psi^\eps(Z_i)\,:\,i=1,\dots,N\big\}\,\text{ is up to $\calL^2$-nullsets pairwise disjoint,}
  \label{eq:A-UV}\\
  &\sum_{i=1}^N M_j(Z_i) = \Mass_j\quad\text{ for }j=1,2\,.
 \label{eq:A-M}
\end{align}
For $\bldZ\in \calA_\eps$ we define
\begin{equation}
  \calF_\eps(\bldZ)= \sum_{i=1}^N F_\eps(Z_i)\,,
  \label{eq:MesoEnergy}
\end{equation}
with $F_\eps$ as defined in \eqref{eq:EnEstSingleCurve}.
\end{definition}

Lemma \ref{lem:ModelDer} yields that
\begin{equation*}
  \calF_\eps(\bldZ) = 2\lambda_\eps \Mass_1 + 2\Mass_2
  + \eps^2 \big(\calP_\eps(\bldZ)+\calG_\eps(\bldZ)\big)\,,
\end{equation*}
where
\begin{equation}
  \calP_\eps(\bldZ)= \sum_{i=1}^N P_\eps(Z_i)\,,\quad
  \calG_\eps(\bldZ)= \sum_{i=1}^N G_\eps(Z_i)\,.
  \label{eq:DefCalPCalG}
\end{equation}

We expect that $\calP_\eps$ describes a phase separation energy and that $\calG_\eps$ describes a bending energy.
This in particular requires a suitable choice of the dependence of $\lambda_\eps$ on $\eps$.

The main goal of this paper is to make the first claim rigorous by proving an appropriate $\Gamma$-convergence result for the mesoscale phase separation energy $\calP_\eps$.

Regarding the bending contribution we will provide the construction of a recovery sequence, that is the upper bound estimate in a corresponding $\Gamma$-convergence result.

\section{\texorpdfstring{$\Gamma$-convergence of a mesoscale phase separation energy}{Gamma-convergence of a mesoscale phase separation energy}}\label{sec:3}

We start with fixing the scaling behavior of the parameter $\lambda_\eps$ with $\eps$.
Fix a constant $c\geq 0$ and let
\begin{equation}
  \lambda_\eps = \frac{1}{1-c\sqrt\eps}\,.
\label{eq:DefLambdaEps}
\end{equation}
The analysis below shows that exactly for this scaling the contributions from bending and the phase separation appear on the same order with respect to the thickness of low energy structures.

In order to prepare extensions of the analysis we generalize the phase separation energy in the mesoscale model to a higher-dimensional setting.
For simplicity we choose $\sigma=1$.

\begin{definition}[Phase separation energies]
\label{def:PhaseSepEn}
Let $\Omega\subset \R^d$ be a bounded domain with $C^1$-regular boundary.
For $\eps>0$ consider $\lambda_\eps$ as in \eqref{eq:DefLambdaEps}, set  $a_\eps(0)=\lambda_\eps$, $a_\eps(1)=1$ and define the mesoscale energies $E_\eps: L^1(\Omega)\times L^1(\Omega)\to [0,\infty]$ by
\begin{align*}
  E_{\eps}(\me,\ch) \coloneqq 
  \begin{cases}
    \int_{\Omega} \frac{1}{\eps^2} \left(1 - a_\eps(\chi_{\eps}) M_{\eps} \right)^2 + \frac{1}{2}|\nabla M_{\eps}|^2 \,dx
    &\text{ if }(\me,\ch)\in H^1(\Omega)\times L^1(\Omega,\{0,1\})\,,\\
    +\infty &\text{ else.}
  \end{cases}
\end{align*}
In addition, define a macroscale energy $E: L^1(\Omega)\times L^1(\Omega)\to [0,\infty]$ by
\begin{align*}
  E(M,\chi) \coloneqq 
  \begin{cases}
    \frac{c^2}{\sqrt{8}} |\nabla \chi|(\Omega) 
    &\text{if } M=1\text{ and } \chi \in \BV(\Omega,\{0,1\})\,, \\
    +\infty & \text{else.}
  \end{cases}
\end{align*} 
\end{definition}

We next formulate the main result of this section.
\begin{theorem}[$\Gamma$-convergence]
\label{simp:gamma}
The family $(E_{\eps})_{\eps>0}$ $\Gamma$-converges to $E$ in $L^1(\Omega)\times L^1(\Omega)$.

More precisely, the following holds:
\begin{enumerate}
\item \emph{Compactness.} 
For any sequence $\epsk\to 0$ ($k\to\infty$) and any sequence  $(M_k,\chi_k)_{k\in\N}$ in $L^1(\Omega)\times L^1(\Omega)$ with
\begin{equation}
  \liminf_{k\to\infty} E_{\epsk}(M_k,\chi_k) < \infty
\label{eq:LiminfBound}
\end{equation}
there exists a subsequence $k\to\infty$ (not relabeled) and a function $\chi\in \BV(\Omega,\{0,1\})$ such that
\begin{equation}
  M_k\to 1\,, \chi_k\to \chi\quad\text{ in }L^1(\Omega)\,.
\label{eq:Cpct}
\end{equation}
\item \emph{Lower bound.}
For any sequence $\epsk\to 0$ ($k\to\infty$) and any sequence $(M_k,\chi_k)_k$ in $L^1(\Omega)\times L^1(\Omega)$ with $(M_k,\chi_k)_k\to (M,\chi)$ in $L^1(\Omega)\times L^1(\Omega)$ we have
\begin{equation}
  E(M,\chi) \leq \liminf_{k\to\infty} E_{\epsk}(M_k,\chi_k)\,.
\label{eq:Liminf}
\end{equation}
In particular, $M=1$ and $\chi\in \BV(\Omega,\{0,1\})$ holds if the right-hand side is finite.
\item \emph{Upper bound.}
For any $\chi\in \BV(\Omega,\{0,1\})$ there exist $\epsk\to 0$ ($k\to\infty$) and a sequence $(M_k,\chi_k)_k$ in $L^1(\Omega)\times L^1(\Omega)$ with $(M_k,\chi_k)_k\to (1,\chi)$ in $L^1(\Omega)\times L^1(\Omega)$ such that
\begin{equation}
  E(1,\chi) \geq \limsup_{k\to\infty} E_{\epsk}(M_k,\chi_k)\,.
\label{eq:Limsup}
\end{equation}
\end{enumerate}
\end{theorem}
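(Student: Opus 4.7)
I recast $E_\eps$ as a Modica--Mortola functional via the rescaling $\phi_\eps := (M_\eps - 1)/(c\sqrt{\eps})$. Using $1 - \lambda_\eps = -\lambda_\eps c\sqrt{\eps}$, a direct computation gives
\begin{align*}
  E_\eps(M_\eps,\chi_\eps) = c^2 \int_\Omega \Bigl[\frac{1}{\eps}\bigl(\chi_\eps \phi_\eps^2 + (1-\chi_\eps)\lambda_\eps^2 (1+\phi_\eps)^2\bigr) + \frac{\eps}{2} |\nabla \phi_\eps|^2\Bigr] dx.
\end{align*}
Pointwise minimization over $\chi \in \{0,1\}$ in the integrand dominates the scalar double-well $\tilde W_\eps(\phi) := \min(\phi^2, \lambda_\eps^2(1+\phi)^2)$, with wells at $\phi = 0$ and $\phi = -1$ in the limit. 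The primitive $G(\phi) := \int_{-1}^\phi \sqrt{2\min(s^2,(1+s)^2)}\,ds$ satisfies $G(-1) = 0$ and $G(0) = 1/\sqrt{8}$, already producing the constant of the prospective $\Gamma$-limit.

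\textbf{Compactness.} The energy bound on $(1 - a_\eps(\chi_\eps)M_\eps)^2$ together with $a_\eps(\chi_\eps) \to 1$ uniformly (since $\lambda_\eps \to 1$) forces $M_\eps \to 1$ in $L^2(\Omega)$, hence in $L^1$. For $\chi_\eps$ --- the step I expect to be the main obstacle, since $\chi_\eps$ carries no direct regularization --- a truncated Modica--Mortola argument on $\phi_\eps$ is the way forward. The AM--GM estimate
\begin{align*}
  E_\eps(M_\eps,\chi_\eps) \geq c^2 \int_\Omega \sqrt{2\tilde W_\eps(\phi_\eps)}\,|\nabla \phi_\eps|\,dx \geq c^2\,|\nabla G(\tau(\phi_\eps))|(\Omega),
\end{align*}
with $\tau$ a truncation to a small neighbourhood of $[-1,0]$, bounds $G(\tau(\phi_\eps))$ in $\BV(\Omega)$. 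Any $L^1$-cluster point takes values in $\{0,1/\sqrt 8\}$, so along a subsequence $\tau(\phi_\eps) \to \chi - 1$ in $L^1$ for some $\chi \in \BV(\Omega,\{0,1\})$. To transfer the convergence back to $\chi_\eps$, the potential part of the energy gives $\int_{\{\chi_\eps = 1\}} \phi_\eps^2 + \int_{\{\chi_\eps = 0\}}(1+\phi_\eps)^2 \leq C\eps$, so $\chi_\eps$ and the threshold $\mathbf{1}_{\{\phi_\eps > -1/2\}}$ differ on a set of measure $O(\eps)$; both then share the limit $\chi$ in $L^1$.

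\textbf{Lower bound.} Given $(M_k,\chi_k) \to (1,\chi)$ in $L^1$ with $\liminf_k E_{\epsk}(M_k,\chi_k) < \infty$, the compactness step produces $\tau(\phi_k) \to \chi - 1$ in $L^1$ along a subsequence realizing the liminf. Combining the AM--GM inequality above with the lower semicontinuity of total variation under $L^1$-convergence of $G(\tau(\phi_k)) \to (1/\sqrt 8)\,\chi$ yields
\begin{align*}
  \liminf_{k\to\infty} E_{\epsk}(M_k,\chi_k) \;\geq\; c^2 \liminf_{k\to\infty} |\nabla G(\tau(\phi_k))|(\Omega) \;\geq\; \frac{c^2}{\sqrt 8}\,|\nabla \chi|(\Omega) \;=\; E(1,\chi).
\end{align*}
The $\eps$-dependence of $\tilde W_{\epsk}$ through $\lambda_\eps$ is harmless, since $\tilde W_{\epsk} \to \min(\phi^2,(1+\phi)^2)$ uniformly on compact sets.

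\textbf{Upper bound.} By density in $\BV(\Omega,\{0,1\})$ together with diagonal extraction, it suffices to treat $\chi$ with smooth reduced boundary $\Gamma = \partial^* \{\chi = 1\}$. Solving the Euler--Lagrange equations of the limiting profile problem produces the $C^1$ profile
\begin{align*}
  w(s) = 1 - \tfrac{1}{2} e^{\sqrt{2}\, s}\quad (s \leq 0),\qquad w(s) = \tfrac{1}{2} e^{-\sqrt{2}\, s}\quad (s \geq 0),
\end{align*}
whose transition cost $\int_\R \bigl[\tfrac{1}{2}(w')^2 + \mathbf{1}_{s>0}\,w^2 + \mathbf{1}_{s<0}(1-w)^2\bigr]\,ds$ equals $1/\sqrt 8$ (both half-lines contribute equally). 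Let $d$ be the signed distance to $\Gamma$, positive on $\{\chi=1\}$; in a tubular neighbourhood of $\Gamma$ define $\phi_{\epsk}(x) := -w(d(x)/\epsk)$, extended smoothly by $-\mathbf{1}_{\{\chi = 0\}}$ outside the tube. Setting $M_{\epsk} := 1 + \cek\,\phi_{\epsk}$ and $\chi_{\epsk} := \chi$, the uniform bound $\|M_{\epsk} - 1\|_{L^\infty} = O(\sqrt{\epsk})$ makes $L^1$-convergence to $(1,\chi)$ immediate, and the coarea formula together with the profile identity gives $E_{\epsk}(M_{\epsk},\chi_{\epsk}) \to (c^2/\sqrt 8)\,\calH^{d-1}(\Gamma) = E(1,\chi)$.
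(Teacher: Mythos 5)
Your proposal is correct and reaches the same constants, but it organizes the argument around a different normalization than the paper. The paper works directly with $M$: it introduces the $\eps$-dependent threshold $a_k^*=\tfrac{2}{\lambda_k+1}$, replaces $\chi_k$ by the optimal phase $\chi_{\{M_k>a_k^*\}}$, and runs the Modica--Mortola trick with an $\eps$-dependent primitive $H_k(t)=\int_{1-c\sqrt{\epsk}}^t\tfrac{\sqrt2}{\epsk}|1-a_k s|\,ds$ whose plateau value $H_k(1)\to c^2/\sqrt8$ produces the limit constant; the recovery profile $q_{\epsk}$ is the exact solution of the $\eps$-dependent equipartition ODE. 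You instead rescale $\phi_\eps=(M_\eps-1)/(c\sqrt\eps)$, which converts $E_\eps$ exactly into a standard Modica--Mortola functional with the essentially $\eps$-independent double well $\min(s^2,\lambda_\eps^2(1+s)^2)\ge\min(s^2,(1+s)^2)$, a fixed primitive $G$ with $G(0)=1/\sqrt8$, and a fixed threshold $-1/2$ (your $\mathbf 1_{\{\phi_\eps>-1/2\}}$ is, up to $O(\sqrt\eps)$, the paper's $\chi_{\{M_k>a_k^*\}}$). What your route buys is transparency: the origin of the constant $c^2/\sqrt8$ and the role of the scaling $\lambda_\eps=(1-c\sqrt\eps)^{-1}$ become visible at once, and the lower bound can use monotonicity $\tilde W_\eps\ge W$ rather than tracking $\eps$-dependent primitives; what the paper's route buys is that all objects ($H_k$, $a_k^*$, $q_{\epsk}$) are exactly adapted to the functional at finite $\eps$, which is then reused verbatim in the recovery construction for the full model in Section~\ref{sec:4}. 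The transfer of the limit from the surrogate phase to $\chi_\eps$ via the $O(\eps)$ measure estimate, and the density-plus-diagonal step for general $\chi$, coincide in both proofs.

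One small point to tighten: ``extended smoothly by $-\mathbf 1_{\{\chi=0\}}$ outside the tube'' as written creates a jump of size $\tfrac12 e^{-\sqrt2\delta/\epsk}$ at the tube boundary; either interpolate explicitly (the extra gradient energy is exponentially small) or, as the paper does, compose $w$ with the globally defined Lipschitz signed distance and estimate the far-field contribution directly. Also note that both your argument and the paper's implicitly require $c>0$; for $c=0$ the energy gives no control on $\chi_\eps$ and the compactness statement degenerates.
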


\begin{proof}
We split the proof into several steps.
For convenience we set $\lambda_k=\lambda_{\epsk}$ and $a_k=a_{\epsk}$.
\begin{enumerate}[wide,label=(\roman*)]
\item \emph{Setup.}
Consider any sequences $\epsk\to 0$ ($k\to\infty$),  $(M_k,\chi_k)_{k\in\N}$ in $L^1(\Omega)\times L^1(\Omega)$ with \eqref{eq:LiminfBound}.
Without loss of generality we can assume $\epsk\leq 1$ for all $k\in\N$ and
\begin{equation}
  E_{\epsk}(M_k,\chi_k) < \Lambda\quad\text{ for all }k\in\N\,.
\label{eq:BoundLambda}
\end{equation}
Moreover, passing possibly to a subsequence of $k\to\infty$, we can assume that the limit
\begin{equation}
  \lim_{k\to\infty} E_{\epsk}(M_k,\chi_k) =\liminf_{k\to\infty} E_{\epsk}(M_k,\chi_k)\leq\Lambda
\label{eq:LimAss}
\end{equation}
exists.
\item \emph{Convergence of mass distributions.}
We begin by proving that
\begin{equation}
  M_k\to 1 \quad\text{ in }L^1(\Omega)\,.
\label{eq:ConvMk}
\end{equation}
By the boundedness of the energy we obtain that
\begin{align*}
  \Lambda \epsk^2 \geq \int_{\Omega} \left(1 - a_k(\chi_k)M_k \right)^2\,.
\end{align*}
Since $a_k$ converges to $1$ uniformly we deduce that in $L^2(\Omega)$ we have
\begin{equation*}
  \lim_{k\to\infty} M_k 
  = \lim_{k\to\infty} \frac{1}{a_k(\chi_k)}\Big(1-\big(1-a_k(\chi_k)M_k\big)\Big) 
  =1\,,
\end{equation*}
Since $\Omega$ is bounded this implies \eqref{eq:ConvMk}.

\item We next provide an estimate from below and a reduction to a one-variable energy.
We therefore associate to  $M_k$ an optimal phase distribution $\och$ by introducing the threshold
\begin{align}
  a^*_k \coloneqq 1 - \frac{\cek}{2-\cek} = \frac{2}{\lambda_k+1}
\label{eq:DefAStar}
\end{align}
and letting
\begin{align*}
  \och \coloneqq \chi_{\{ M_k > a^*_k \}}\,.
\end{align*}
We then define the reduced energy $\overline{E}_k:L^1(\Omega)\to [0,\infty]$,
\begin{align*}
  \overline{E}_k(M) \coloneqq E_{\epsk}(M,\chi_{\{ M > a^*_k \}})
\end{align*}
and claim that
\begin{equation}
  \overline{E}_k(M_k) \leq E_{\epsk}(M_k,\chi_k)
\label{eq:EstBarEk}
\end{equation}
holds.
In fact, we have
\begin{align*}
  &(1-a_k(0)M_k)^2 - (1-a_k(1)M_k)^2 \nonumber\\
  &\qquad = (1-\lambda_kM_k)^2 - (1-M_k)^2 
  = M_k(\lambda_k-1)\Big((\lambda_k+1)M_k-2\Big)
  \leq 0
\end{align*}
if and only if $M_k\leq a_k^*$, and
\begin{equation}
  (1-a_k(0)a_k^*)^2 = (1-a_k(1)a_k^*)^2\,.
\label{eq:AStarProp1}
\end{equation}
Hence $(1-a_k(\och)M_k)^2\leq  (1-a_k(\chi_k)M_k)^2$ and \eqref{eq:EstBarEk} follows.
We will prove later that $\chi_k$ and $\och$ are very close. 

\item We next provide a lower bound for the single-variable energy $\overline{E}_k$ by the usual «Modica--Mortola trick».
Young's inequality yields for any $k\in\N$
\begin{align}
  \overline{E}_k(M_k)&=\int_{\Omega} \left[ \frac{1}{\epsk^2} \left(1 - a_k(\och(M_k)) M_k \right)^2 + \frac{|\nabla M_k|^2}{2} \right]\,dx \nonumber\\
  &\geq \int_{\Omega} \frac{\sqrt{2}}{\epsk} \big| \left( 1 - a_k(\och(M_k)) M_k \right) \nabla M_k \big|\,dx 
  =\int_{\Omega} |\nabla H_k(M_k)|\,dx\,,
\label{eq:MoMoTrick}
\end{align}
where we have chosen $H_k$ as
\begin{align*}
  H_k(t) = \int_{1-c\sqrt{\epsk}}^{t} \frac{\sqrt{2}}{\epsk} \left|1 - a_k(\och(s)) s \right| \,ds\,.
\end{align*}
\item  We collect some properties of the functions $H_k$.
These functions are monotonically increasing with
\begin{equation*}
  H_k(M)=0 \quad\text{ for }M=\frac{1}{a_k(0)}=\frac{1}{\lambda_k}= 1 - c\sqrt{\epsk}\,.
\end{equation*}
By using \eqref{eq:AStarProp1} we obtain
\begin{align}
  \frac{1}{\sqrt{2}\epsk\lambda_k}(\lambda_ka_k^*-1)^2 
  + \frac{1}{\sqrt{2}\epsk}(1-a_k^*)^2= \frac{1}{\sqrt{2}\epsk}\frac{(\lambda_k-1)^2}{\lambda_k(\lambda_k+1)} = \frac{c^2}{\sqrt{2}(2-c\sqrt{\epsk})}\,.
\label{eq:Hk1a}
\end{align}
We deduce that
\begin{align}
  H_k(t)= 
  \begin{cases}
    -\frac{1}{\sqrt{2}\lambda_k\epsk}(1-\lambda_kt)^2
    &\text{ for }t<\frac{1}{\lambda_k}\,,\\
    \frac{1}{\sqrt{2}\lambda_k\epsk}(1-\lambda_kt)^2
    &\text{ for }\frac{1}{\lambda_k}\leq t \leq a_k^*\,,\\
    \frac{c^2}{\sqrt{2}(2-c\sqrt{\epsk})}-\frac{1}{\sqrt{2}\epsk}(1-t)^2
    &\text{ for }a_k^*\leq t\leq 1\,,\\
    \frac{c^2}{\sqrt{2}(2-c\sqrt{\epsk})}+\frac{1}{\sqrt{2}\epsk}(t-1)^2
    &\text{ for }t> 1\,.
  \end{cases}
\label{eq:Hk1b}
\end{align}
In particular,
\begin{align}
  H_k(1)&= \frac{c^2}{\sqrt{2}(2-c\sqrt{\epsk})}\,.
\label{eq:Hk1}
\end{align}
Moreover, there exists $C>0$ such that
\begin{equation}
  |H_k(t)| \leq C+ \frac{1}{\epsk^2}(1-a_k(t)t)^2
  \quad\text{  for all }k\in\N\,,t\in\R\,.
\label{eq:EstHk}
\end{equation}
\item We next provide a uniform bound for $(H_k(M_k))_{k}$ in $W^{1,1}(\Omega)$.
We first deduce from \eqref{eq:BoundLambda}, \eqref{eq:EstBarEk} and \eqref{eq:MoMoTrick} that
\begin{equation}
  \int_{\Omega} |\nabla H_k(M_k)|\,dx \leq \Lambda\quad\text{ for all }k\in\N\,.
\label{eq:BVBound}
\end{equation}
Furthermore, by \eqref{eq:BoundLambda}, \eqref{eq:EstBarEk} and \eqref{eq:EstHk} we have
\begin{equation}
  \int_{\Omega} |H_k(M_k)|\,dx \leq C(1+\Lambda)\quad\text{ for all }k\in\N\,.
\label{eq:L1Bound}
\end{equation}
\item \emph{Compactness of $(H_k(M_k))_{k}$ in $L^1(\Omega)$.}
By the Rellich theorem and \eqref{eq:BVBound}, \eqref{eq:L1Bound} there exists a subsequence of $k\to\infty$ (not relabeled) and function $u\in L^1(\Omega)$ such that
\begin{equation}
  H_k(M_k) \to u \quad\text{ in }L^1(\Omega)\,.
\label{eq:ConvHkMk}
\end{equation}
From the lower-semicontinuity property of the total variation and \eqref{eq:LimAss}, \eqref{eq:BoundLambda}, \eqref{eq:EstBarEk} and \eqref{eq:MoMoTrick} we further deduce $u\in\BV(\Omega)$ with
\begin{equation}
  \int_{\Omega} |\nabla u| \leq \liminf_{k\to\infty}\int_{\Omega} |\nabla H_k(M_k)|\,dx \leq \liminf_{k\to\infty} E_{\epsk}(M_k,\chi_k)\,.
\label{eq:LimInf-1}
\end{equation}
\item \emph{Compactness of $(\och)_{k}$ in $L^1(\Omega)$.}
We claim that
\begin{equation}
  \och \to \chi:= \frac{\sqrt{8}}{c^2}u \quad\text{ in }L^1(\Omega)\,.
\label{eq:ConvBarChi}
\end{equation}
In fact, we compute
\begin{align*}
  \Big|H_k(M_k)-\frac{c^2}{\sqrt{8}}\och\Big|
  &=(1-\och)|H_k(M_k)| +\och\Big|H_k(M_k)-\frac{c^2}{\sqrt{8}}\Big|\\
  &\leq (1-\och)\frac{1}{\sqrt{2}\lambda_k\epsk}(1-\lambda_kM_k)^2
  + \och\Big(\Big|H_k(1)-\frac{c^2}{\sqrt{8}}\Big| + \frac{1}{\sqrt{2}\epsk}(1-M_k)^2\Big)\\
  &\leq C\Big(\sqrt{\epsk} + \epsk\frac{1}{\epsk^2}\big(1-a_k(\och)M_k\big)^2 \Big)\,.
\end{align*}
Therefore,
\begin{align*}
  \Big\|H_k(M_k)-\frac{c^2}{\sqrt{8}}\och\Big\|_{L^1(\Omega)} 
  \leq C(\Omega)\Big(\sqrt{\epsk} + \epsk\overline{E}_k(M_k)\Big)\to 0 \,(k\to\infty)\,,
\end{align*}
and by \eqref{eq:ConvHkMk} we conclude \eqref{eq:ConvBarChi}.
Since the functions $\och$ are $\{0,1\}$-valued we deduce that $\chi\in L^1(\Omega,\{0,1\})$.
Furthermore, since $\chi= \frac{\sqrt{8}}{c^2}u$ we have $\chi\in \BV(\Omega,\{0,1\})$ and by \eqref{eq:LimInf-1}
\begin{equation}
  \frac{c^2}{\sqrt{8}}\int_{\Omega} |\nabla \chi| \leq \liminf_{k\to\infty}\int_{\Omega} |\nabla H_k(M_k)|\,dx \leq \liminf_{k\to\infty} E_{\epsk}(M_k,\chi_k)\,.
  \label{eq:LimInf-2}
\end{equation}
\item \emph{Compactness and lower semicontinuity.}
We first show that $\chi$ is also the $L^1(\Omega)$-limit of $\chi_k$.
In fact we deduce from \eqref{eq:BoundLambda} that
\begin{align*}
  \Lambda &> E_{\epsk}(M_k,\chi_k) 
  \geq \int_{\Omega} \frac{1}{\epsk^2} \big(1 - a_k(\chi_k)M_k\big)^2 \,dx\\
  &\geq \frac{1}{\epsk^2} \left[ \int_{\{M_k > a_k^*\} \cap \{\chi_k=0\}} ( 1 - \lambda_{k} M_k )^2 \,dx + \int_{\{M_k \leq a_k^*\} \cap \{\chi_k=1\}} (1-M_k)^2 \,dx \right]\\
  &\geq \frac{1}{\epsk^2} \left[ ( 1 - a_k^*\lambda_k)^2 |\{\och-\chi_k=1\}| + ( 1-a_k^*)^2 |\{\och-\chi_k=-1\}| \right]\,.
\end{align*}
By \eqref{eq:DefAStar}, \eqref{eq:AStarProp1} it holds
\begin{equation*}
  ( 1 - a_k^*\lambda_k)^2=(1-a_k^*)^2
  =\frac{c^2 \epsk}{(2 - c\sqrt{\epsk})^2}\,,
\end{equation*}
hence we deduce that for some $C>0$
\begin{equation*}
  \|\och-\chi_k\|_{L^1(\Omega)} \leq C\epsk\Lambda\,,
\end{equation*}
which by \eqref{eq:ConvBarChi} implies
\begin{equation*}
  \chi_k\to \chi \quad\text{ in }L^1(\Omega)\,.
\end{equation*}
Together with \eqref{eq:ConvMk} this proves the compactness claim in Theorem \ref{simp:gamma}.

The lower bound statement has already been proved in \eqref{eq:LimInf-2}.

\item \emph{Optimal profile.}
To prepare the construction of a recovery sequence we would like to construct a suitable one-dimensional transition profile between the preferred states $M_k=\frac{1}{\lambda_k}<1$ and $M_k=1$ for the energy $\overline{E}_k(M_k)$.

We therefore recall the application of Young's inequality in \eqref{eq:MoMoTrick}. 
A minimizer $M_k$ of $\overline{E}_{\epsk}$ should achieve equality in this estimate. 
Considering the one-dimensional case this motivates to look for a function $q:\R\to\R$ such that
\begin{align*}
  q' = \frac{\sqrt{2}}{\epsk} \Big| 1-a_k(\bar\chi_k(q))q\Big| \,,\quad
  \bar\chi_k(q)=\chi_{(a_k^*,\infty)}(q)\,,
\end{align*}
subject to the far field conditions and a normalization,
\begin{equation*}
  \lim_{r\to -\infty}q(r)=\frac{1}{\lambda_k}\,,\quad
  \lim_{r\to +\infty}q(r)=1\,,\quad
  q(0)=a_k^*\,.
\end{equation*}
We obtain the solution
\begin{align}
  q_{\epsk}(r) = \left\{ 
  \begin{array}{rcl} 
    (1-\cek) + \frac{(1-\cek)\cek}{2-\cek} \exp \left( \frac{\sqrt{2} r}{\epsk(1-\cek)} \right)\,, & r\leq 0\,,\\
    1 - \frac{\cek}{2-\cek}\exp\left( -\frac{\sqrt{2}r}{\epsk}\right)\,, & r\geq 0\,,
  \end{array} \right.
\label{eq:OptProfile}
\end{align}
and 
\begin{align}
  \frac{\sqrt{2}}{\epsk}\Big|1-a_k(\och(q_{\epsk}(r)))q_{\epsk}(r)\Big| 
  = q_{\epsk}'(r) = \left\{ 
  \begin{array}{rcl} 
    \frac{c\sqrt{2}}{\sqrt{\epsk}(2-\cek)}\exp \left( \frac{\sqrt{2} r}{\epsk(1-\cek)} \right)\,, & r\leq 0\,,\\
    \frac{c\sqrt{2}}{\sqrt{\epsk}(2-\cek)}\exp\left( -\frac{\sqrt{2}r}{\epsk}\right)\,, & r\geq 0\,,
  \end{array} \right.
\label{eq:Equipart}
\end{align}
Note that, in contrast to the classical case, $q_{\epsk}$ cannot be represented as a simple rescaling of an $k$-independent profile.

\item \emph{Construction of a recovery sequence.}
Let $\chi \in \BV(\Omega,\{0,1\})$ be given. We choose $\chi_k = \chi$ for all $k\in\N$.
To construct suitable functions $M_k$ we first consider the case where $\{\chi=1\} = V \cap \Omega$, for $V \subset \R^d$ open and $\partial V$ a non-empty compact $C^2$-hypersurface with $\calH^{d-1}(\partial V \cap \partial \Omega)=0$. Then there is a $\delta>0$ such that the signed distance function $\sdist(\cdot,\partial V)$, taken positive inside $V$, is smooth on $V_{\delta} \coloneqq \{x \in \Omega : |\sdist(\cdot,\partial V)|<\delta\}$ with $|\nabla \sdist(\cdot,\partial V)|=1$. We define
\begin{align*}
M_k(x) \coloneqq q_{\epsk}(\sdist(x,\partial V)).
\end{align*}
Note that, even without any assumptions on $V$, $M_k$ is Lipschitz as a composition of Lipschitz functions. Since $\Omega$ is a bounded domain with $C^1$-regular boundary we apply \cite[5.8, Thm.~4]{Ev10} and Hölder's inequality to obtain that $M_k \in C^{0,1}(\Omega)=W^{1,\infty}(\Omega) \subset H^1(\Omega)$.
By normalization and monotonicity of $q_{\epsk}$ we have then that
\begin{align*}
\och = \chi_k = \chi \quad \text{for all $k \in \N$.}
\end{align*}
We next calculate the energy $E_{\epsk}(M_k,\chi_k)$. 
Firstly we obtain in 
\begin{align*}
  V_{\delta}^+ \coloneqq \{x\in\Omega\,:\, 0<\sdist(x,\partial V)<\delta\} \subset \{\chi = 1\}
\end{align*}
with \eqref{eq:Equipart}
\begin{align*}
  \frac{1}{\epsk^2}(1-M_k)^2 = \frac{c^2}{\epsk(2-\cek)^2}\exp\left( -\frac{ \sqrt{8} \sdist(x,\partial V)}{\epsk} \right) = \frac{|\nabla M_k|^2}{2}
\end{align*}
and similarly in $V_{\delta}^-:=\{x\in\Omega|0<-\sdist(x,\partial V)<\delta\}\subset \{\chi=0\}$
\begin{align*}
  \frac{1}{\epsk^2} \left( 1 - \frac{M_k}{1-\cek} \right)^2 = \frac{c^2}{\epsk(2-\cek)^2}\exp\left( \frac{\sqrt{8} \sdist(x,\partial V)}{\epsk(1-\cek)} \right) = \frac{|\nabla M_k|^2}{2}.
\end{align*}
To estimate the contribution from $V_{\delta}^+$ to $E_{\epsk}(M_k,\chi_k)$ we apply the coarea formula for Lipschitz functions, using that $|\nabla \sdist(\cdot,\partial V)|=1$:
\begin{align*}
  &\frac{2c^2}{\epsk(2-\cek)^2} \int_{V_{\delta}^+} \exp \left( - \frac{\sqrt{8}\sdist(x,\partial V)}{\epsk} \right)\,dx\\
  = &\frac{2c^2}{\epsk(2-\cek)^2} \int_0^{\delta} \exp \left( - \frac{\sqrt{8}r}{\epsk} \right) \calH^{d-1}\big(\{x\in\Omega\,:\,\sdist(x,\partial V)=r\}\big)\,dr\\
  \leq &\sup_{t \in (-\delta,\delta)}\left( \calH^{d-1}\big( \{x\in\Omega\,:\,\sdist(x,\partial V)=t\} \big) \right) \frac{c^2}{\sqrt{2}(2-\cek)^2}\Big( 1 - \exp \Big( \frac{-\sqrt{8}\delta}{\epsk} \Big) \Big)\,.
\end{align*}
Similarly one obtains the contribution from $V_{\delta}^-$
\begin{align*}
  &\frac{2c^2}{\epsk(2-\cek)^2} \int_{V_{\delta}^-} \exp \left( \frac{\sqrt{8}\sdist(x,\partial V)}{\epsk(1-\cek)} \right)\,dx\\
  = &\frac{2c^2}{\epsk(2-\cek)^2} \int_{-\delta}^0 \exp \left( \frac{\sqrt{8}r}{\epsk(1-\cek)} \right) \calH^{d-1}\big(\{x\in\Omega\,:\,\sdist(x,\partial V)=r\}\big)\,dr\\
  \leq &\sup_{t \in (-\delta,\delta)}\left( \calH^{d-1}\big( \{x\in\Omega\,:\,\sdist(x,\partial V)=t\} \big) \right) \frac{c^2(1-\cek)}{\sqrt{2}(2-\cek)^2}\Big( 1 - \exp \Big( \frac{-\sqrt{8}\delta}{\epsk(1-\cek)} \Big) \Big)\,.
\end{align*}
Using the regularity assumptions on $V$ we obtain from \cite[Lemma 5.8]{Leon13} that for $\delta \rightarrow 0$
\begin{align*}
  \sup_{t \in (-\delta,\delta)}\left( \calH^{d-1}\big( \{x\in\Omega\,:\,\sdist(x,\partial V)=t\} \big) \right) \rightarrow  \calH^{d-1}(\Omega \cap \partial V) = |\nabla \chi|(\Omega).
\end{align*}
Taking first the limit in $k \rightarrow \infty$ and then the limit in $\delta \rightarrow 0$ we obtain that the energy contribution of $V_{\delta}^+ \cup V_{\delta}^-$ is bounded in the limit by
\begin{align*}
\frac{c^2}{\sqrt{8}} |\nabla\chi|(\Omega).
\end{align*}
It remains to show that the contribution from $\Omega \setminus \big(V_{\delta}^+ \cup V_{\delta}^-)$ is negligible. 
We write the set as the disjoint union
\begin{align*}
  \Omega \setminus \big(V_{\delta}^+ \cup V_{\delta}^-) 
  = \big( (\Omega \cap V) \setminus V_{\delta}^+ \big) \cup \big( (\Omega\cap V^c) \setminus V_{\delta}^- \big)\,,
\end{align*}
and omit the nullset $\partial V\cap\Omega$. 
Let $x\in (\Omega \cap V) \setminus V_{\delta}^+$, then $\chi(x)=1$ and $\sdist(x,\partial V) \geq \delta$. 
Since $(1-q_{\epsk})$ is positive and decreasing monotonically on $\R_+$ we obtain by using \eqref{eq:OptProfile} in the equality below
\begin{align*}
  \int_{V \setminus V_{\delta}^+} \frac{1}{\epsk^2}(1-M_k(x))^2 \,dx &\leq \calL^d(\Omega) \Big( \frac{1-q_{\epsk}(\delta)}{\epsk} \Big)^2\\
  &= \frac{c^2\calL^d(\Omega)}{\epsk (2-\cek)^2} \exp \Big( - \frac{\sqrt{8}\delta}{\epsk} \Big) \rightarrow 0 \,(k\to\infty)\,.
\end{align*}
Restricted to $\R_+$ the function $q_{\epsk}$ is smooth and $q_{\epsk}'$ is positive and monotonically decreasing. 
Noting again that $\sdist(\cdot,\partial V) \in C^{0,1}(\Omega) = W^{1,\infty}(\Omega) \subset H^1(\Omega)$ with $|\partial_j \sdist(\cdot,\partial V)|\leq 1$ (see for example the proof of \cite[Theorem 10.5]{Alt16}) we obtain
\begin{align*}
  \int_{V \setminus V_{\delta}^+} \frac{|\nabla M_k(x)|^2}{2} \,dx 
  &= \frac{1}{2} \int_{V \setminus V_{\delta}^+} (q_{\epsk}'(\sdist(x,\partial V)))^2 |\nabla \sdist(x,\partial V)|^2\,dx\\
  &\leq \frac{1}{2}\big(q_{\epsk}'(\delta)\big)^2 n|\Omega|\\
  &= \frac{c^2 n|\Omega|}{\epsk (2-\cek)^2} \exp \Big( -\frac{\sqrt{8}\delta}{\epsk} \Big) \rightarrow 0 \,(k\to\infty)\,.
\end{align*}
Thus the energy contribution of $((\Omega \cap V) \setminus V_{\delta}^+)$ is negligible.
By completely analogous arguments the same applies to the contribution from $( (\Omega \cap V^c) \setminus V_{\delta}^-)$ to the energy.

Lastly we extend the result to general functions $\chi \in \BV(\Omega,\{0,1\})$. 
By \cite[Lemma 1.15]{Leon13} there are open sets $V_k \subset \R^d$, where $\partial V_k$ are non-empty compact $C^2$-hypersurfaces and $\calH^{d-1}(\partial \Omega \cap \partial V_k)=0$, such that $\chi_{V_k}|_{\Omega} \rightarrow \chi$ in $L^1(\Omega)$ and $|\nabla \chi_{V_k}|(\Omega) \rightarrow |\nabla \chi|(\Omega)$. 
Constructing $M_k$ depending on $\chi_{V_k}$ as above, we obtain the recovery sequence $(M_k,\chi_{V_k})$ for $(1,\chi)$ with the required properties.
\end{enumerate}
\end{proof} 

\section{Single curve two-phase energy: recovery sequence construction}\label{sec:4}
Here we consider the full two-phase mesoscale energy with both a bending energy and a phase separation contribution.
We restrict ourselves to the two-dimensional case and the simplified setting of just one mono-layer.

We aim at justifying that the mesoscale energy in fact approximates a macroscopic two-phase bending energy with line tension.

On the mesoscale we consider $\eps>0$ and tuples $Z=(\gamma,\theta,\chi,M)\in\calZ_{\eps}$ as in Definition \ref{def:EnSingleCurve}.
We consider the rescaled reduced energy that we obtained in Lemma \ref{lem:ModelDer},
\begin{equation}
  \calE_\eps(Z) := E_\eps(Z) + G_\eps(Z)\,,
\label{eq:DefEeps}
\end{equation}
with $E_\eps,G_\eps$ as defined in \eqref{eq:DefPEps}, \eqref{eq:DefGEps},
and with the choice \eqref{eq:DefLambdaEps} for the parameter $\lambda_\eps$.

We next introduce the macroscale energy.
\begin{definition}[2D, 2-phase, single-curve Jülicher-Lipowsky energy]
\label{def:JL-2D}
Consider a tuple $(\gamma,\chi)$ of a closed curve $\gamma\in H^2_{\loc}(\R)$ parametrized by arclength with minimal period $L$ such that $\gamma|_{[0,L)}$ is an embedding, and an $L$-periodic function $\chi\in \BV_{\loc}(\R)$.
Let $M_1:= \int_0^L\chi\,ds$ and $M_2:= L-M_1$ and define
\begin{equation}
  \calE(\gamma,\chi) = \int_0^L \frac{1}{2}\kappa^2\,ds + \frac{c^2}{\sqrt{8}}|\nabla\chi|\big([0,L)\big)\,.
\label{eq:ELimsup}
\end{equation}
\end{definition}

\begin{theorem}
Consider any $(\gamma,\chi)$ as in Definition \ref{def:JL-2D} with $\calE(\gamma,\chi)<\infty$.
Then there exists a sequence $\epsk\to 0$ ($k\to\infty$) and a sequence of tuples $(Z_k)_k$, $Z_k=(\gamma_k,\theta_k,\chi_k,M_k)\in\calZ_{\epsk}$ as in Definition \ref{def:ConfSingleCurve} such that
\begin{align}
  &M_1(Z_k)=M_1\,,\quad M_2(Z_k)=M_2\quad\text{ for all }k\in\N\,,
\label{eq:MassConstraint}\\
  &(\gamma_k,\theta_k) \weakto (\gamma,\nu) \quad\text{ in }H^1_{\loc}(\R)^2\,,
\label{eq:GammaThetaConv}\\
  &(\chi_k,M_k) \to (\chi,1) \quad\text{ in }L^1_{\loc}(\R)^2\,,
\label{eq:ChiMConv}
\end{align}
and such that
\begin{equation}
  \calE_{\epsk}(Z_k)\to \calE(\gamma,\chi)\quad\text{ as }k\to\infty\,.
\label{eq:UpperBdFull}
\end{equation}  
\end{theorem}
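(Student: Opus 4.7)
The plan is to combine the one-dimensional optimal profile $q_{\epsk}$ from Theorem \ref{simp:gamma} (which produces the line tension contribution) with a Peletier--Röger-type construction of the ray direction $\theta_k$ (which produces the bending contribution). Since $E_\eps$ depends only on $(M_k,\chi_k)$ while $G_\eps$ depends on $(\gamma_k,\theta_k,M_k,\chi_k)$ essentially through $\nu\cdot\theta_k$ and $\theta_k'$, the two parts can be designed in an almost decoupled way. I would keep $\gamma_k:=\gamma$ throughout, so that the novelty of the construction is concentrated in $M_k$, $\theta_k$ and a small modification of $\chi$ needed to enforce the mass constraints.

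As a first step I would reduce to the case where $\chi$ has finitely many jumps. By \cite[Lemma 1.15]{Leon13}, applied in the periodic setting exactly as in the final step of the proof of Theorem \ref{simp:gamma}, $\chi$ can be approximated in $L^1_{\loc}(\R)$ (with convergence of the total variation) by characteristic functions $\chi^{(j)}$ of $L$-periodic open sets with smooth boundary, each with only finitely many jumps in $[0,L)$. A standard diagonal argument then reduces the claim to the case where $\chi$ itself has finitely many jump points $s_1<\cdots<s_N$.

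Next, $M_k$ would be defined by pasting around every $s_i$ a suitably translated and reflected copy of the profile \eqref{eq:OptProfile}, so that inside an $O(\epsk)$-layer around $s_i$ the function $M_k$ interpolates between the preferred values $1/\lambda_{\epsk}$ (on $\{\chi=0\}$) and $1$ (on $\{\chi=1\}$); outside these layers $M_k$ equals the corresponding preferred value, so that $a_{\epsk}(\chi)M_k\equiv 1$ there. The equipartition identity \eqref{eq:Equipart}, exactly as in the proof of Theorem \ref{simp:gamma}, then yields $E_{\epsk}(M_k,\chi)\to \tfrac{c^2}{\sqrt{8}}|\nabla\chi|([0,L))$. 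The bending part would be handled by choosing $\theta_k=(\cos\alpha_k)\nu+(\sin\alpha_k)\tau$, with $\tau=\gamma'$ and $\alpha_k$ an $L$-periodic smooth function selected (as in \cite{PeRoe09}) as the solution of a one-dimensional Euler--Lagrange problem that equi-partitions the two integrands of $G_\eps$; in the bulk region where $a_{\epsk}(\chi)M_k\equiv 1$ this reduces to the single-phase recovery of \cite{PeRoe09} and gives $G_{\epsk}(Z_k)\to\int_0^L\tfrac12\kappa^2\,ds$, while the $O(\epsk)$-layers around the $s_i$ contribute only a lower-order error. The mass constraints \eqref{eq:MassConstraint} would then be enforced by a final small perturbation of $\chi$: replacing each jump point $s_i$ by a nearby $s_i+\eta_{i,k}$ with $\eta_{i,k}\to 0$ yields $\chi_k\in\BV_{\loc}(\R)$ with $\chi_k\to\chi$ in $L^1_{\loc}(\R)$; an inverse function argument in the $N$ parameters $\eta_{i,k}$ should allow to tune them so that $M_j(Z_k)=M_j$ for $j=1,2$ exactly, without affecting the limits computed above.

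The main technical difficulties to be addressed are: (a) the simultaneous enforcement of the two mass constraints $M_1(Z_k)=M_1$ and $M_2(Z_k)=M_2$, which requires solving a nonlinear system in the shift parameters $\eta_{i,k}$ and is more delicate than the single-phase situation of \cite{PeRoe09} where only the total lipid mass had to be fixed; and (b) the verification of the embedding condition of Definition \ref{def:ConfSingleCurve}, since the mesoscale strip has non-homogeneous thickness (ratio $\lambda_{\epsk}:1$ between the two phases) and the formula for $t^{\epsk}$ must be controlled uniformly across the transition layers of $M_k$, where both $M_k$ and the slope $\alpha_k$ vary rapidly. Both obstacles are exactly the new ingredients---already highlighted in the introduction---that prevent the proof from being a direct quotation of \cite{PeRoe09} and Theorem \ref{simp:gamma}; I would expect them to be resolved by a careful local combination of the PR direction field with an adapted form of the profile $q_{\epsk}$ near each $s_i$.
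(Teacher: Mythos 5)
There is a genuine gap in your mechanism for enforcing the mass constraints \eqref{eq:MassConstraint}, and it is not a technicality that ``an inverse function argument should resolve'': it cannot. With your choice $\gamma_k=\gamma$ the total arclength is fixed at $L$, and your $M_k$ is pinned (up to $O(\epsk)$ transition layers) to the value $1$ on $\{\chi_k=1\}$ and to $1/\lambda_{\epsk}=1-c\sqrt{\epsk}$ on $\{\chi_k=0\}$. Summing the two constraints gives the total-mass condition
\begin{equation*}
  M_1(Z_k)+M_2(Z_k)=\int_0^{L}M_k\,ds = L - c\sqrt{\epsk}\,\big|\{\chi_k=0\}\big| + O(\epsk)\,,
\end{equation*}
which must equal the fixed number $M_1+M_2=L$. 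Shifting the jump points $s_i\mapsto s_i+\eta_{i,k}$ only redistributes arclength between the two phases at fixed total $L$: to leading order every partial derivative of $\big(M_1(Z_k),M_2(Z_k)\big)$ with respect to $\eta_{i,k}$ is parallel to $(1-c\sqrt{\epsk},-1)$, so the Jacobian of your $N$-parameter map has rank one and the target lies off its range at distance of order $\sqrt{\epsk}\,|\{\chi=0\}|$ in the complementary direction $(1,1)$. The inverse function theorem does not apply, and the system is unsolvable unless $|\{\chi_k=0\}|\to 0$. This is exactly why the paper does \emph{not} keep $\gamma_k=\gamma$: it perturbs the curve normally, $\tilde\gamma_{r,t}=\gamma+(r\rho+t\sigma)\nu$ with $\rho$ supported in $\{\chi=0\}_{-\delta}$ and $\sigma$ in $\{\chi=1\}_{-\delta}$ and $\int\rho\kappa\,ds\neq 0\neq\int\sigma\kappa\,ds$. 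The arclength element then changes at first order by $\kappa(r\rho+t\sigma)$, so the two phase masses (and in particular their sum) can be adjusted independently; the resulting $2\times 2$ matrix $A=\nabla_{(r,t)}\Phi(0,0,0)$ is invertible and the implicit function theorem applies. This also explains the paper's preliminary reduction to smooth $\gamma$ with $|\{\kappa=0\}|=0$, a step your proposal has no analogue of.

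Two further, more minor, divergences. First, the paper simply takes $\theta_k=\nu_k=(\gamma_k')^\perp$, which kills the first term of $G_{\epsk}$ identically and makes the second converge directly to the curvature integral; your PR-style tilted field $\theta_k=(\cos\alpha_k)\nu+(\sin\alpha_k)\tau$ is unnecessary here and is dangerous, since the term $\epsk^{-2}(1-\nu\cdot\theta_k)(\nu\cdot\theta_k)^{-1}a_{\epsk}^2M_k^2\sim \epsk^{-2}\alpha_k^2/2$ forces $\alpha_k=O(\epsk)$ and would otherwise contribute a spurious $O(1)$ bending excess. Second, the reduction to finitely many jumps needs no approximation lemma: a $\{0,1\}$-valued $L$-periodic $\BV$ function with $|\nabla\chi|([0,L))<\infty$ automatically has finitely many jumps in a period, each contributing $1$ to the total variation. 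The approximation argument the paper does need at the end is in $\gamma$ (smooth curves with $|\{\gamma''=0\}|=0$ approximating a general $H^2$ curve), not in $\chi$.
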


\begin{proof}
We mainly follow the construction already used in \cite{PeRoe09}, using instead of a constant mass distribution the optimal profile determined in the proof of Theorem \ref{simp:gamma}.
The main additional difficulty is to maintain the prescribed total mass for the separate phases \eqref{eq:MassConstraint}.

\begin{enumerate}[wide,label=(\roman*)]
\item  We first consider the case that $\gamma$ is in addition to the properties prescribed in Definition \ref{def:JL-2D} smooth with $|\{\gamma''=0\}|=0$. 
Since $|\nabla\chi|\big([0,L)\big)<\infty$ we can choose $\chi$ as the absolutely continuos representation, which has a countable number of jump points $J\subset \R$ that describe the support of the measure $|\nabla\chi|$.
Then $J\cap [0,L)$ is finite and without loss of generality we can assume that $0\not\in J$, which means that $\chi$ is locally constant at $0$.
Moreover, choose $\eps_0>0$ such that $3\eps_0<\min\{|x-y|\,:\,x\neq y\,, x,y\in J\}$ and denote by $\sdist(\cdot,J):\R\to\R$ the signed distance from $J$, taken positive inside $\{\chi=1\}$.

Choose a smooth unit normal field along $\gamma$ by letting $\nu \coloneqq (\gamma')^{\perp}$ and set $\kappa:=-\gamma''\cdot \nu$.
Since $|\gamma'|=1$ we have $\gamma''\cdot\gamma'=0$ and deduce
\begin{equation}
  |\{\kappa=0\}|=0\,.
\label{eq:Kappa}
\end{equation}

\item In order to satisfy the mass constraint we use a construction that is based on the implicit function theorem.
We consider two given smooth $L$-periodic fields $\rho,\sigma:\R\to\R^2$ that we determine below.
For $r,t\in (-\delta_0,\delta_0)$ with $\delta_0>0$ sufficiently small we define the perturbed curves
\begin{align*}
  \tilde{\gamma}_{r,t} \coloneqq \gamma + \left( r\rho + t\sigma \right) \nu\,,
\end{align*}
and the arc-length parametrization
\begin{align*}
  \gamma_{r,t} \coloneqq \tilde{\gamma}_{r,t} \circ \psi_{r,t}^{-1}\,,
\end{align*}
where $\psi_{r,t}(z)=\int_0^z |\tilde{\gamma}_{r,t}'|(s)\,ds$.

This induces the perturbed phase functions
\begin{align*}
  \chi_{r,t} \coloneqq \chi \circ \psi_{r,t}^{-1}\,.
\end{align*}
With the optimal profile $q_\eps$ determined in \eqref{eq:OptProfile} we set
\begin{align*}
  M^*_\eps(s) \coloneqq 
  \begin{cases}
    q_\eps\big(\sdist(\cdot,J)\big)\circ\psi_{r,t} \quad&\text{ if }0<\eps<\eps_0\,,\\
    1 &\text{ for }\eps=0\,.
  \end{cases}
\end{align*}
and define
\begin{align*}
  M_{\eps,r,t} \coloneqq M^*_\eps \circ \psi_{r,t}^{-1}.
\end{align*}
We next introduce a function that measures the deficit in the mass constraints. 
For $L_{r,t}$ denoting the lengths of $\gamma_{r,t}$ we define $\Phi:(-\sqrt[4]{\eps_0},\sqrt[4]{\eps_0})\times (-\delta_0,\delta_0)\times (-\delta_0,\delta_0)\to\R$,
\begin{align*}
  \Phi(\eps,r,t) &\coloneqq 
  \int_0^{L_{r,t}} 
  \begin{pmatrix}
    1 - \chi_{r,t}(s) \\ \chi_{r,t}(s) 
  \end{pmatrix} 
  M_{\eps^4,r,t}(s) \,ds - 
  \begin{pmatrix}
    M_0 \\ M_1 
  \end{pmatrix}\\
  &= \int_0^L 
  \begin{pmatrix}
    1 - \chi(s) \\ \chi(s) 
  \end{pmatrix}
  M^*_{\eps^4}(s) |\tilde{\gamma}_{r,t}'(s)|\,ds - 
  \begin{pmatrix}
    M_0 \\ M_1 
  \end{pmatrix}\,.
\end{align*}
We observe that $\Phi(0,0,0)=0$ and $\Phi$ is continuously differentiable. 
We compute
\begin{equation*}
  |\tilde{\gamma}_{r,t}'|^2 = \big(1+(r\rho+t\sigma)\nu'\cdot\gamma'\big)^2
  + \big(r\rho'+t\sigma'\big)^2\,.
\end{equation*}
Using $|\gamma'|=1$ and $\gamma'$ orthogonal to $\nu$ we deduce
\begin{align*}
  \nabla_{(r,t)}|\tilde{\gamma}_{r,t}'|(0,0) &= \gamma' \cdot 
  \begin{pmatrix}\rho \nu' + \rho' \nu\\ \sigma \nu' + \sigma' \nu\end{pmatrix}
  = (\gamma' \cdot \nu')\begin{pmatrix}\rho\\\sigma\end{pmatrix}
  = \kappa\begin{pmatrix}\rho\\\sigma\end{pmatrix}\,.
\end{align*}
Thus
\begin{align*}
  A := \nabla_{(r,t)} \Phi(0,0,0) 
  = \int_0^L 
  \begin{pmatrix} 
    (1 - \chi) \rho & (1- \chi) \sigma \\ 
    \chi \rho & \chi\sigma 
  \end{pmatrix}(s) 
  \kappa(s)\, ds.
\end{align*}
We now choose smooth $L$-periodic functions $\rho,\sigma$ such that for some $\delta>0$
\begin{align*}
  \spt\rho\cap [0,L] &\subset \{\chi=0\}_{-\delta}\,,\quad
  \int_0^L \rho\kappa\,ds\neq 0\,,\\
  \spt\sigma\cap [0,L] &\subset \{\chi=1\}_{-\delta}\,,\quad
  \int_0^L \sigma\kappa\,ds\neq 0\,,
\end{align*}
where for $E\subset \R$ we let $E_{-\delta}:= \{\dist(\cdot,E^c)>\delta\}$.

With this choices, we see that $A$ is invertible. 
By the implicit function theorem there exist differentiable functions $r$ and $t$ with $r(0)=t(0)=0$ and $\Phi\big(\eps,r(\eps),t(\eps)\big)=0$ for $0<\eps<\eps_0$ sufficiently small. 

\item We then set $r_k=r(\sqrt[4]{\epsk})$, $t_k=t(\sqrt[4]{\epsk})$ and define $Z_k=(\gamma_k,\theta_k,\chi_k,M_k)$,
\begin{align*}
  \gamma_k \coloneqq \gamma_{r_k,t_k}\,,\quad
  \chi_k \coloneqq \chi_{r_k,t_k}\,,\quad
  M_k \coloneqq M_{\epsk,r_k,t_k}\,,\quad 
  \theta_k \coloneqq \nu_k := (\gamma_k')^{\perp}\,.
\end{align*}
Then $Z_k\in \calZ_{\epsk}$ and we have with $L_k:=L_{r_k,t_k}$
\begin{align*}
  M_1(Z_k)-M_1= 
  \int_0^{L_k}\big(1 - \chi_{r_k,t_k}(s)\big)M_{\epsk,r_k,t_k}(s) \,ds -M_1
  = \Phi_1(\sqrt[4]{\epsk},r(\sqrt[4]{\epsk}),t(\sqrt[4]{\epsk})) =0\,,
\end{align*}
and analogously $M_2(Z_k)-M_2=0$, hence the mass constraint \eqref{eq:MassConstraint} is satisfied.

Since $\rho$ and $\gamma$ are smooth we have $\gamma_k\to \gamma$ in $C^{\infty}$. 

\item To compute the limit of the phase separation energy $E_{\epsk}$ from \eqref{eq:DefPEps} we set $a_k=a_{\epsk}$, $\lambda_k=\lambda_{\epsk}$ and recall that
\begin{align}
  E_{\epsk}(Z_k) &= \int_0^{L_k} \Big[ \epsk^{-2}(1-a_k(\chi_k(s)) M_k(s))^2 +\frac{1}{2}|M_k'|^2(s)\Big]\,ds\nonumber\\
  &= \int_0^L \Big[\epsk^{-2}(1-a_k(\chi(s)) M^*_{\epsk}(s))^2 +\frac{1}{2|\tilde\gamma_k'|^2(s)}|(M^*_{\epsk})'|^2(s)\Big]|\tilde\gamma_k'|(s)\,ds\,,
\label{eq:EepsZk}
\end{align}
where we have set $\tilde\gamma_k:=\tilde{\gamma}_{r_k,t_k}$ and used a variable transformation.

We observe that $\rho=\sigma=0$ in the $\delta$-neighborhood $J_\delta$ of $J$, $J_\delta:=\{\dist(\cdot,J)<\delta\}$, which yields $\tilde{\gamma}_k=\gamma$ and $|\tilde\gamma_k'|=1$ in $J_\delta$.
Therefore, the contribution of $J_\delta$ to the integral on the right-hand side of \eqref{eq:EepsZk} can then be computed as in the proof of Theorem \ref{simp:gamma}, which gives
\begin{align}
  \int_{J_\delta \cap (0,L)} \Big[\epsk^{-2}\big(1-a_k(\chi(s)) M^*_{\epsk}(s)\big)^2 +\frac{1}{2}|(M^*_{\epsk})'|^2(s)\Big]\,ds\,
  = \frac{c^2}{\sqrt{8}}|J\cap (0,L)| + \omega_\delta(k)\,,
\label{eq:EepsZk2}
\end{align}
where $\omega_\delta(k)\to 0$ with $k\to\infty$.

Again by computations that are analogous to those in the proof of Theorem \ref{simp:gamma} we have on $([0,L) \setminus J_{\delta})$ that
\begin{equation*}
  \epsk^{-2} (1- a_k(\chi_k) M_k)^2+ \frac{1}{2}|M_k'|^2 \leq C\epsk^{-1}\exp\big(-\sqrt{8}\delta\epsk^{-1}\big)
\end{equation*}
and therefore exponentially small with $k\to\infty$.
Since $|\tilde\gamma_k'|\leq 2$ for $k$ sufficiently small we deduce from \eqref{eq:EepsZk} and \eqref{eq:EepsZk2} that
\begin{align}
  \lim_{k\to\infty} E_{\epsk}(Z_k) &= \frac{c^2}{\sqrt{8}}|J\cap (0,L)| = \frac{c^2}{\sqrt{8}}|\nabla \chi|([0,L))\,.
\label{eq:EepsZk3}
\end{align}

\item We next characterize the limit of the bending energy contribution $G_\eps$. 
Since $\theta_k=\nu_k$ this term reduces to
\begin{align*}
  G_{\epsk}(Z_k) = \int_0^{L_k} \frac{a_k(\chi_k)^2}{4}|\nu_k'|^2M_k^4\,ds\,.
\end{align*}
Using that both $a_k(\chi_k)$ and $M_k$ uniformly converge to $1$ we deduce that 
\begin{align*}
  \int_0^{L_k} \frac{a_k(\chi_k)^2}{4}|\nu_k'|^2M_k^4\,ds = \int_0^{L_k} \frac{|\gamma_k''|^2}{4} \,ds +\omega(k)\,,
\end{align*}
where $\omega(k)\to 0$ for $k\to\infty$.

Since $\gamma_k$ converges to $\gamma$ in $C^{\infty}$ we therefore obtain that
\begin{align*}
  \lim_{k\to\infty} G_{\epsk}(Z_k) &= \frac{1}{4}\int_0^L |\gamma''|^2 \,ds\,.
\end{align*}
Together with \eqref{eq:EepsZk3} this yields \eqref{eq:Limsup} under the additional assumption that $\gamma$ is smooth.
\item Now let $\gamma$ be general. 
Then we can approximate $\gamma$ strongly in $H^2_{\loc}$ by a sequence $(\gamma^{(\ell)})_{\ell\in\N}$ of $L$-periodic smooth functions parametrized by arclength with $|\{|(\gamma^{(\ell)})''|=0\}|=0$.
Setting $\chi^{(\ell)}=\chi$ we have
\begin{equation*}
  \calE(\gamma^{(\ell)},\chi^{(\ell)}) \to \calE(\gamma,\chi)\quad\text{ as }\ell\to\infty\,.
\end{equation*}
As shown above for each $\ell\in\N$ there exists a recovery sequence $(Z_k^{(\ell)})_{k}$ for $(\gamma^{(\ell)},\chi^{(\ell)})$.
Choosing a diagonal sequence we obtain a recovery sequence for $(\gamma,\chi)$ with the required properties.
\end{enumerate}
\end{proof}

\section{Discussion}\label{sec:5}
We have proposed a simple continuum model for membranes made of two different kinds of amphiphilic molecules.
The model is characterized by an energy that involves a small length scale parameter $\eps>0$, which eventually can be linked to the thickness of low energy configurations.
The main question behind this contribution is if such a model (possibly under additional restrictions) shows a preference for structures that resemble phase separated membranes in the following two respects: Firstly that preferred structures are thin in one and extended in the other directions and that preferred structures consist of nearly homogeneous regions with respect to the lipid composition and a thin transition layer in between.
And secondly, that energy contributions can be identified that are linked to stretching and bending of the overall macroscopic structure and to the size of the transition layer between the nearly homogeneous phases.
Within simplified settings we (at least partially) investigate both properties by studying the limit $\eps\to 0$.
We thereby use variational analysis, in particular $\Gamma$-convergence techniques.

The model itself allows to compare a large class of possible configurations, which might approach different kinds of limit structures. 
This in principle allows to verify self-aggregation properties of multi-phase amphiphiles structures in liquid environments and to justify postulates that are imposed in macroscale models.

\smallskip
The model proposed in this paper carries a Cahn--Hilliard--Van der Waals type contribution and for this part we have presented a rather complete mathematical justification of the asymptotic reduction to the perimeter functional, which is the fundamental energy description of macroscale phase separation.
Regarding the full mesoscale energy we have presented some partial results for the macroscopic scale transition.
The construction of a recovery sequence gives good indication that the mesoscale energy in fact characterizes key features of amphiphilic multiphase membranes.

\smallskip
In the construction of a recovery sequence for the full mesoscale model the main restriction is that we only consider a single curve, which somehow corresponds to a single monolayer of amphiphiles.
Generalization to collections of monolayers and extensions to bilayer structures even of the recovery sequence construction become by the multiphase character challenging.
One difficulty can be motivated when one starts from a kind of mid-surface.
A phase transition enforces a non-homogeneity of the thickness, which for the upper and lower surfaces induces a change in geometry and an excess energy, which needs to be avoided.
Another major challenge is to describe the phase separation of limit structures, since the two phases might (even in the limit) be differently distributed on mesoscale layers that have overlapping support in the limit.


\smallskip
Let uns finally comment on some interpretations of the results of this paper in view of the applications that motivated the model.
One interesting observation of the presented analysis is that the particular choice of scaling of the parameter $\lambda_\eps$, which expresses a length difference of order $\eps^{3/2}$ between the two types of lipids, leads to a bending and to a phase interface contribution \emph{on the same order}.
Also, the thickness of the mesoscale layers and the width of the transition region between the phases is of the same order $\eps$.
Different choices for the scaling of $\lambda_\eps$ and other regimes could be considered and an evaluation of the emerging behavior is up to future research.

Our contribution shows that a mathematical analysis as initiated in this paper can characterize specific properties induced by the modeling choices.
The approach might thereby contribute to the discussion of necessary ingredients for a minimal multi-phase membrane that shows the «correct» qualitative behavior.

%

\printbibliography
\end{document}